\documentclass[11pt]{article}
\usepackage{srcltx}
\usepackage{xcolor}
\usepackage{graphicx}
\usepackage{amsmath}
\usepackage{amssymb}
\usepackage{theorem}
\usepackage{euscript}
\usepackage{epic,eepic}
\usepackage{pstricks}
\PassOptionsToPackage{normalem}{ulem}
\usepackage{ulem}
\topmargin -0.7cm
\oddsidemargin 0.3cm
\textwidth  15.5cm 
\headheight 0.0cm
\textheight 22.6cm
\parindent  5mm
\parskip    8pt
\tolerance  1000
\definecolor{labelkey}{rgb}{0,0.08,0.45}
\definecolor{refkey}{rgb}{0,0.6,0.0}
\definecolor{Brown}{rgb}{0.45,0.0,0.05}
\definecolor{dgreen}{rgb}{0.00,0.49,0.00}
\definecolor{dblue}{rgb}{0,0.08,0.75}
\title{\sffamily OUTER APPROXIMATION METHOD FOR CONSTRAINED\\
COMPOSITE FIXED POINT PROBLEMS INVOLVING\\ 
LIPSCHITZ PSEUDO CONTRACTIVE OPERATORS 
\footnote{Contact author: 
Luis M. Brice\~{n}o-Arias, {\ttfamily lbriceno@math.jussieu.fr},
phone: +33 1 4427 8540, fax: +33 1 4427 2724. This work
was supported by the Agence Nationale de la Recherche under
grant ANR-08-BLAN-0294-02.}}
\author{Luis M. Brice\~{n}o-Arias
\\[5mm]
\small UPMC Universit\'e Paris 06\\
\small Laboratoire Jacques-Louis Lions -- UMR 7598\\
\small \'Equipe Combinatoire et Optimisation -- UMR 7090\\
\small 75005 Paris, France (lbriceno@math.jussieu.fr)\\[4mm]
}

\date{}
\tolerance 2500
\RequirePackage[dvipdfm,colorlinks,hyperindex]{hyperref}
\hypersetup{
    linktocpage=true,
    citecolor=dblue,    
    linkcolor=dgreen    
}
 
\newcommand{\Scal}[2]{{\bigg\langle{{#1}\:\bigg |~{#2}}\bigg\rangle}}
\newcommand{\scal}[2]{{\left\langle{{#1}\mid{#2}}\right\rangle}}

\newcommand{\menge}[2]{\big\{{#1}~\big |~{#2}\big\}} 

\newcommand{\bary}{\ensuremath{\widetilde{y}}}
\newcommand{\barr}{\ensuremath{\widetilde{r}}}
\newcommand{\barq}{\ensuremath{\widetilde{q}}}

\newcommand{\HH}{\ensuremath{{\mathcal H}}}

\newcommand{\emp}{\ensuremath{{\varnothing}}}
\newcommand{\lev}[1]{{\ensuremath{{{{\operatorname{lev}}}_{\leq #1}}\,}}}

\newcommand{\Id}{\ensuremath{\operatorname{Id}}\,}
\newcommand{\Fix}{\ensuremath{\operatorname{Fix}}\,}
\newcommand{\RR}{\ensuremath{\mathbb{R}}}

\newcommand{\RPP}{\ensuremath{\left]0,+\infty\right[}}

\newcommand{\RX}{\ensuremath{\left]-\infty,+\infty\right]}}

\newcommand{\NN}{\ensuremath{\mathbb N}}
\newcommand{\ran}{\ensuremath{\operatorname{ran}}}
\newcommand{\zer}{\ensuremath{\operatorname{zer}}}

\newcommand{\gr}{\ensuremath{\operatorname{gra}}}
\newcommand{\conv}{\ensuremath{\operatorname{conv}}}

\newcommand{\pinf}{\ensuremath{{+\infty}}}

\newcommand{\weakly}{\ensuremath{\:\rightharpoonup\:}}

\newcommand{\dom}{\ensuremath{\operatorname{dom}}}

\newcommand{\card}{\ensuremath{\operatorname{card}}}

\newcommand{\inte}{\ensuremath{\operatorname{int}}}


\newtheorem{theorem}{Theorem}[section]
\newtheorem{lemma}[theorem]{Lemma}

\newtheorem{proposition}[theorem]{Proposition}
\theoremstyle{plain}{\theorembodyfont{\rmfamily}%
\newtheorem{assumption}[theorem]{Assumption}}
\theoremstyle{plain}{\theorembodyfont{\rmfamily}%
\newtheorem{algorithm}[theorem]{Algorithm}}
\theoremstyle{plain}{\theorembodyfont{\rmfamily}%
}
\theoremstyle{plain}{\theorembodyfont{\rmfamily}%
\newtheorem{remark}[theorem]{Remark}}
\theoremstyle{plain}{\theorembodyfont{\rmfamily}%
}
\theoremstyle{plain}{\theorembodyfont{\rmfamily}%
\newtheorem{problem}[theorem]{Problem}}
\theoremstyle{plain}{\theorembodyfont{\rmfamily}%
\newtheorem{notation}[theorem]{Notation}}

\numberwithin{equation}{section}
\begin{document}
\maketitle

\begin{abstract}
We propose a method for solving constrained fixed point problems 
involving compositions of Lipschitz pseudo contractive and firmly 
nonexpansive operators in Hilbert spaces. Each iteration of the 
method uses separate evaluations of these operators and an outer 
approximation given by the projection onto a closed half-space 
containing the constraint set. Its convergence is established
and applications to monotone inclusion splitting and constrained 
equilibrium problems are demonstrated.
\end{abstract}

{\small 
\noindent
{\bfseries 2000 Mathematics Subject Classification:}
Primary 65K05;
Secondary 47H05, 47H10, 47J05, 65K15, 90C25.

\noindent
{\bfseries Keywords:}
firmly nonexpansive operator,
fixed point problems,
splitting algorithm,
equilibrium problem,
monotone inclusion,
monotone operator,
pseudo contractive operator.
}

\newpage
\section{Introduction}
The problem under consideration in this paper is the following.
\begin{problem}
\label{prob:1}
Let $\HH$ be a real Hilbert space, fix 
$\varepsilon\in\left]0,1\right[$, and let $(\beta_n)_{n\in\NN}$ 
be a sequence in $\left]0,1-\varepsilon\right]$.
For every $n\in\NN$, 
let $T_n\colon\HH\to\HH$ be a firmly nonexpansive operator, let
$R_n\colon\dom R_n\subset\HH\to\HH$ be a pseudo contraction 
such that $(\Id-R_n)$ is $\beta_n$--Lipschitz continuous, and 
let $S$ be a closed convex subset of $\HH$. The problem is to
\begin{equation}
\label{e:prob1}
\text{find}\quad x\in S\quad\text{such that}\quad(\forall n\in\NN)
\quad T_nR_nx=x.
\end{equation}
The set of solutions to \eqref{e:prob1} is denoted by $Z$.
\end{problem}

As will be seen subsequently, this formulation models a broad range
of problems in nonlinear analysis. Methods can be found in the 
literature to solve Problem~\ref{prob:1} in special cases. 
Thus, when $S=\HH$, $R_n\equiv\Id$, and $Z\neq\emp$, algorithms 
can be found in \cite{Bro67b,Opti04}, and when $S=\HH$,
$T_n\equiv\Id$, and $R_n\equiv R$, where $R$ is a Lipschitz pseudo
contraction from a convex set $C$ into itself, methods can be found 
in \cite{Bruc74,Ishi74,Schu93,Zhou08}.
Since the composition between a firmly nonexpansive operator and a 
Lipschitz pseudo contraction is not a pseudo contraction in general, 
Problem~\ref{prob:1} can not be solved by the methods mentioned 
above. The purpose of the present paper is to provide an algorithm for 
solving Problem~\ref{prob:1}. It involves four elementary steps
at each iteration $n$: the first three steps are successive 
computations of operators $R_n$, $T_n$, and $R_n$, and the last
step is an outer approximation of the constraint. The latter is 
given by the projection onto a half-space containing $S$.
In Section~\ref{sec:2} we propose our algorithm and we prove its 
weak convergence to a solution to Problem~\ref{prob:1}. In 
Section~\ref{sec:3} we study an application to monotone inclusions 
under convex constraints, and obtain an extension of a result of 
\cite{Tsen00}. Finally, in Section~\ref{sec:4}, we study an 
application to equilibrium problems with convex constraints. 

\begin{notation}
Throughout this paper $\HH$ denotes a real Hilbert space, 
$\scal{\cdot}{\cdot}$ denotes its scalar product, and 
$\|\cdot\|$ denotes the associated norm.
For a single-valued operator $R\colon\dom R\subset\HH\to\HH$, 
the set of fixed points is $\Fix R=\menge{x\in\HH}{x=Rx}$,
$R$ is $\chi$--Lipschitz continuous for some $\chi\in\RPP$, if
it satisfies, for every $x$ and $y$ in $\HH$, $\|Rx-Ry\|\leq
\chi\|x-y\|$, $R$ is pseudo contractive if it satisfies
\begin{equation}
\label{e:pseudc}
(\forall x\in\dom R)(\forall y\in\dom R)\quad
\|Rx-Ry\|^2\leq\|x-y\|^2+\|(\Id-R)x-(\Id-R)y\|^2,
\end{equation}
$R$ is firmly nonexpansive if it satisfies
\begin{equation}
\label{e:firmnonexp}
(\forall x\in\dom R)(\forall y\in\dom R)\quad
\|Rx-Ry\|^2\leq\|x-y\|^2-\|(\Id-R)x-(\Id-R)y\|^2,
\end{equation}
or equivalently,
\begin{equation}
\label{e:firmnonexp2}
(\forall x\in\dom R)(\forall y\in\dom R)\quad
\scal{x-y}{Rx-Ry}\geq\|Rx-Ry\|^2,
\end{equation}
and $R$ is $\chi$--cocoercive if $\chi R$ is firmly nonexpansive.
\end{notation}

\section{Algorithm and convergence}
\label{sec:2}

At each iteration $n\in\NN$, our method for solving Problem~\ref{prob:1}
involves an outer approximation to $S$ and separate computations 
of the operators $T_n$ and $R_n$. Each approximation is computed by the 
projection onto a closed affine half-space containing $S$, and errors
on the computation of the operators are modeled by the sequences
$(a_n)_{n\in\NN}$, $(b_n)_{n\in\NN}$, and $(c_n)_{n\in\NN}$.

\begin{algorithm}
\label{algo:1}
Let $(T_n)_{n\in\NN}$, $(R_n)_{n\in\NN}$, and $S$ be as in 
Problem~\ref{prob:1}. For every $n\in\NN$, let $Q_n\colon\HH\to\HH$ 
be the projector operator onto a closed affine half-space containing 
$S$, let $(a_n)_{n\in\NN}$, $(b_n)_{n\in\NN}$, and
$(c_n)_{n\in\NN}$ be sequences in $\HH$ such that 
$\sum_{n\in\NN}\|a_n\|<\pinf$, $\sum_{n\in\NN}\|b_n\|<\pinf$,
and $\sum_{n\in\NN}\|c_n\|<\pinf$. Moreover, let 
$\varepsilon\in\left]0,1\right[$, let 
$(\lambda_n)_{n\in\NN}$ be a sequence in 
$\left[\varepsilon,1\right]$, let $x_0\in\dom R_0$, and consider the 
following routine.
\begin{align}
\label{e:iter1}
(\forall n\in\NN)\quad
&\left 
\lfloor 
\begin{array}{l}
y_n=R_nx_n+a_n\\
q_n=T_ny_n+b_n\\
\text{If }q_n\notin\dom R_n\:\text{stop}.\\
\text{\rm Else}\\
\left 
\lfloor 
\begin{array}{l}
r_n=R_nq_n+c_n\\
z_n=x_n-y_n+r_n\\
x_{n+1}=x_n+\lambda_n\big(Q_nz_n-x_n\big)\\
\end{array}
\right.\\
\text{\rm If}\: x_{n+1}\notin\dom R_{n+1}\:\text{stop}.\\
\text{\rm Else } n=n+1.
\end{array}
\right.
\end{align}
\end{algorithm}
Our main result is the following.
\begin{theorem}
\label{t:0}
Suppose that $Z\neq\emp$ in Problem~\ref{prob:1} and
that Algorithm~\ref{algo:1} generates infinite orbits 
$(x_n)_{n\in\NN}$ and $(z_n)_{n\in\NN}$ such that 
\begin{equation}
\label{e:condconv}
(\forall x\in\HH)\quad
\begin{cases}
x_{k_n}\weakly x\\
x_n-T_nR_nx_n\to 0\\
z_n-x_n\to0\\
z_n-Q_nz_n\to 0
\end{cases}\qquad
\Rightarrow \quad x\in Z.
\end{equation}
Then $(x_n)_{n\in\NN}$ converges 
weakly to a solution to Problem~\ref{prob:1}.
\end{theorem}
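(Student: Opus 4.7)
The plan is to follow the classical three-step template for weak convergence of splitting iterations: establish a quasi-Fejér inequality with respect to $Z$, extract convergence of the three auxiliary residuals appearing in \eqref{e:condconv}, and then invoke a routine Opial-type argument together with the very hypothesis \eqref{e:condconv}.

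First, I would derive a Tseng-type descent for $\|z_n-z\|^2$. Fix $z\in Z$ and set $u^{\star}=R_nz$, so that $T_nu^{\star}=z$. Condition \eqref{e:pseudc} is equivalent to monotonicity of $\Id-R_n$, which is moreover $\beta_n$-Lipschitz by hypothesis, while $T_n$ is firmly nonexpansive. Ignoring errors, the inner iteration reads $z_n = q_n + (\Id-R_n)x_n - (\Id-R_n)q_n$, a Tseng forward--backward--forward step associated with the monotone-Lipschitz operator $\Id-R_n$ and the firmly nonexpansive operator $T_n$. Expanding $\|z_n-z\|^2$ and combining \eqref{e:firmnonexp}--\eqref{e:firmnonexp2} with monotonicity and the Lipschitz bound on $\Id-R_n$ should yield an estimate of the form
\begin{equation*}
\|z_n-z\|^2\leq\|x_n-z\|^2-(1-\beta_n^2)\|x_n-q_n\|^2+\alpha_n,
\end{equation*}
where $\alpha_n$ is summable and controlled by $\|a_n\|+\|b_n\|+\|c_n\|$ times the bounded quantities $\|x_n-z\|$ and $\|q_n-z\|$. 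Since $Q_n$ is the projector onto an affine half-space containing $S\supset Z$, $\|Q_nz_n-z\|^2\leq\|z_n-z\|^2-\|z_n-Q_nz_n\|^2$, and the convex-combination identity applied to $x_{n+1}=(1-\lambda_n)x_n+\lambda_nQ_nz_n$ then gives the quasi-Fejér inequality
\begin{equation*}
\|x_{n+1}-z\|^2\leq\|x_n-z\|^2-\lambda_n(1-\beta_n^2)\|x_n-q_n\|^2-\lambda_n\|z_n-Q_nz_n\|^2-\lambda_n(1-\lambda_n)\|x_n-Q_nz_n\|^2+\lambda_n\alpha_n.
\end{equation*}

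Next I would exploit this inequality. Summability of $\lambda_n\alpha_n$ implies that $(\|x_n-z\|)_{n\in\NN}$ converges for every $z\in Z$, that $(x_n)_{n\in\NN}$ is bounded, and that $\sum_n\lambda_n(1-\beta_n^2)\|x_n-q_n\|^2<\pinf$ together with $\sum_n\lambda_n\|z_n-Q_nz_n\|^2<\pinf$. Since $\lambda_n\geq\varepsilon$ and $1-\beta_n^2\geq 2\varepsilon-\varepsilon^2>0$, this forces $x_n-q_n\to 0$ and $z_n-Q_nz_n\to 0$. From $q_n-b_n=T_n(R_nx_n+a_n)$, $1$-Lipschitz continuity of $T_n$, and $a_n,b_n\to 0$, I obtain $x_n-T_nR_nx_n\to 0$. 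Moreover $z_n-x_n=R_nq_n-R_nx_n+c_n-a_n$, and the Lipschitz property of $\Id-R_n$ yields $\|R_nq_n-R_nx_n\|\leq(1+\beta_n)\|q_n-x_n\|\to 0$, hence $z_n-x_n\to 0$. Thus the three convergence conditions in the antecedent of \eqref{e:condconv} are satisfied along the full sequence.

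The conclusion is then standard: any weak cluster point of $(x_n)_{n\in\NN}$ satisfies the left-hand side of \eqref{e:condconv} and therefore lies in $Z$; combined with the convergence of $(\|x_n-z\|)_{n\in\NN}$ for every $z\in Z$, Opial's lemma produces weak convergence of the entire sequence to a single element of $Z$. The main obstacle is the first step, namely the Tseng-type descent for $\|z_n-z\|^2$ with careful absorption of the three error sequences via \eqref{e:firmnonexp}--\eqref{e:firmnonexp2} and the monotone-Lipschitz structure of $\Id-R_n$; Steps 2 and 3 are then largely routine.
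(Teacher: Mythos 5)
Your proposal is correct and follows essentially the same route as the paper: the same Tseng-type descent estimate $\|z_n-z\|^2\leq\|x_n-z\|^2-(1-\beta_n^2)\|x_n-q_n\|^2+\alpha_n$ obtained from monotonicity and Lipschitz continuity of $\Id-R_n$ together with firm nonexpansiveness of $T_n$, then the quasi-Fej\'er inequality via the half-space projection, summability of the residuals, and an Opial/quasi-Fej\'er argument combined with hypothesis \eqref{e:condconv}. The only cosmetic differences are that the paper first proves the error-free descent for $x_n-\bary_n+\barr_n$ and folds the summable perturbation $e_n$ in afterwards (which also supplies the boundedness of $\|x_n-z\|$ needed before squaring), and that it cites the quasi-Fej\'er convergence theorem of Combettes rather than invoking Opial's lemma directly.
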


\begin{proof}
Set 
\begin{equation}
\label{e:proof21}
(\forall n\in\NN)\quad
\bary_n=R_nx_n,\quad
\barq_n=T_n\bary_n,\quad\text{and}\quad
\barr_n=R_n\barq_n,
\end{equation}
fix $z\in Z$, and let $n\in\NN$. Note that, since $z\in S$, we have 
\begin{equation}
\label{e:px}
z=P_Sz=Q_nz=T_nR_nz=R_nz+(\Id-R_n)T_nR_nz.
\end{equation}
In addition, it follows from \cite[Theorem~1]{BroP67} that 
$(\Id-R_n)$ is monotone, which yields 
$\scal{(\Id-R_n)\barq_n-(\Id-R_n)z}{\barq_n-z}\geq 0$.
Therefore, we deduce from \eqref{e:px}, \eqref{e:proof21}, and the 
firm nonexpansivity of $T_n$ that
\begin{align}
\label{e:scal1}
2\scal{\barq_n-z}{(\Id-R_n)x_n-(\Id-R_n)\barq_n}
&=-2\scal{\barq_n-z}
{(\Id-R_n)\barq_n-(\Id-R_n)z)}
\nonumber\\
&\hspace{1cm}+2\scal{\barq_n-z}{x_n-z}
-2\scal{\barq_n-z}{R_nx_n-R_nz}\nonumber\\
&\leq2\scal{\barq_n-z}{x_n-z}
-2\scal{T_n\bary_n-T_nR_nz}{\bary_n-R_nz}\nonumber\\
&\leq2\scal{\barq_n-z}{x_n-z}-
2\|T_n\bary_n-T_nR_nz\|^2\nonumber\\
&=\big(2\scal{\barq_n-z}{x_n-z}-\|\barq_n-z\|^2\big)
-\|\barq_n-z\|^2\nonumber\\
&\leq\|x_n-z\|^2-\|\barq_n-x_n\|^2
-\|\barq_n-z\|^2.
\end{align}
Hence, since $\sup_{k\in\NN}\beta_k^2\leq(1-\varepsilon)^2
\leq1-\varepsilon$, it follows from \eqref{e:proof21} and the 
$\beta_n$--Lipschitz continuity of $(\Id-R_n)$ that
\begin{align}
\label{e:eq000}
\|x_n-\bary_n+\barr_n-z\|^2
&=\|\barq_n-z+(x_n-\bary_n)-(\barq_n-\barr_n)\|^2\nonumber\\
&=\|\barq_n-z+(\Id-R_n)x_n-(\Id-R_n)\barq_n\|^2\nonumber\\
&=\|\barq_n-z\|^2+
\|(\Id-R_n)x_n-(\Id-R_n)\barq_n\|^2\nonumber\\
&\hspace{3.7cm}+2\scal{\barq_n-z}{(\Id-R_n)x_n
-(\Id-R_n)\barq_n}\nonumber\\
&\leq\|\barq_n-z\|^2+\beta_n^{2}\|\barq_n-x_n\|^2\nonumber\\
&\hspace{3.7cm}+2\scal{\barq_n-z}{(\Id-R_n)x_n
-(\Id-R_n)\barq_n}\nonumber\\
&\leq\|x_n-z\|^2-(1-\beta_n^{2})\|\barq_n-x_n\|^2\nonumber\\
&\leq\|x_n-z\|^2-\varepsilon\|\barq_n-x_n\|^2,
\end{align}
which yields
\begin{equation}
\label{e:1212}
\|x_n-\bary_n+\barr_n-z\|\leq\|x_n-z\|.
\end{equation}
We also derive from \eqref{e:iter1} and \eqref{e:proof21}
the following inequalities. First, $\|y_n-\bary_n\|=\|a_n\|$, 
and since $T_n$ is nonexpansive, we obtain
\begin{equation}
\label{e:ine2}
\|q_n-\barq_n\|=\|T_ny_n+b_n-T_n\bary_n\|
\leq\|\bary_n-y_n\|+\|b_n\|=\|a_n\|+\|b_n\|.
\end{equation}
In turn, it follows from the $\beta_n$--Lipschitz continuity
of $(\Id-R_n)$ that
\begin{align}
\label{e:ine3}
\|r_n-\barr_n\|&=\|R_nq_n+c_n-R_n\barq_n\|\nonumber\\
&\leq\|(\Id-R_n)\barq_n-(\Id-R_n)q_n\|+\|q_n-\barq_n\|
+\|c_n\|\nonumber\\
&\leq(1+\beta_n)\|q_n-\barq_n\|+\|c_n\|\nonumber\\
&\leq 2(\|a_n\|+\|b_n\|)+\|c_n\|.
\end{align}
Altogether, if we set 
\begin{equation}
\label{e:errr}
e_n=\bary_n-y_n+r_n-\barr_n,
\end{equation}
we have
\begin{equation}
\|e_n\|=\|\bary_n-y_n+r_n-\barr_n\|\leq\|y_n-\bary_n\|
+\|r_n-\barr_n\|\leq3\|a_n\|+2\|b_n\|+\|c_n\|,
\end{equation}
and therefore $\sum_{k\in\NN}\|e_k\|<\pinf$.
Hence, from \eqref{e:iter1}, \eqref{e:px}, the nonexpansivity
of $Q_n$, and \eqref{e:1212} we get
\begin{align}
\label{e:y_netc}
\|x_{n+1}-z\|&=\|(1-\lambda_n)(x_n-z)+\lambda_n(Q_nz_n-Q_nz)\|
\nonumber\\
&\leq (1-\lambda_n)\|x_n-z\|+
\lambda_n\|Q_nz_n-Q_nz\|\nonumber\\
&\leq (1-\lambda_n)\|x_n-z\|+\lambda_n\|z_n-z\|\nonumber\\
&\leq(1-\lambda_n)\|x_n-z\|+\lambda_n\big(\|x_n-\bary_n
+\barr_n-z\|+\|e_n\|\big)\nonumber\\
&\leq\|x_n-z\|+\|e_n\|, 
\end{align}
and we conclude from \cite[Lemma~3.1]{Comb01} that
\begin{equation}
\label{e:sumfejer}
\xi=\sup_{k\in\NN}\|x_k-z\|<\pinf.
\end{equation}
Thus, from the convexity of $\|\cdot\|^2$, the firm nonexpansivity
of $Q_n$, \eqref{e:px}, \eqref{e:iter1}, 
\eqref{e:errr}, \eqref{e:eq000}, and 
\eqref{e:1212} we have
\begin{align}
\|x_{n+1}-z\|^2
&\leq 
(1-\lambda_n)\|x_n-z\|^2+\lambda_n\|Q_nz_n-Q_nz\|^2\nonumber\\
&\leq (1-\lambda_n)\|x_n-z\|^2+\lambda_n\big(\|z_n-z\|^2
-\|z_n-Q_nz_n\|^2\big)\nonumber\\
&\leq (1-\lambda_n)\|x_n-z\|^2+\lambda_n
\big(\|x_n-\bary_n+\barr_n-z\|^2+\|e_n\|^2
\nonumber\\
&\hspace{3cm}+2\,\|x_n-\bary_n+\barr_n-z\|\,\|e_n\|
-\|z_n-Q_nz_n\|^2\big)\nonumber\\
&\leq(1-\lambda_n)\|x_n-z\|^2+\lambda_n\big(\|x_n-z\|^2-
\varepsilon\|\barq_n-x_n\|^2\nonumber\\
&\hspace{5cm}+\|e_n\|^2+2\,\|x_n-z\|\,\|e_n\|
-\|z_n-Q_nz_n\|^2\big)\nonumber\\
&\leq \|x_n-z\|^2-\varepsilon^2\|\barq_n-x_n\|^2
-\varepsilon\,\|z_n-Q_nz_n\|^2+\eta_n,
\end{align}
where $\eta_n=\|e_{n}\|^2+2\xi\|e_{n}\|$ satisfies 
$\sum_{k\in\NN}\eta_k<\pinf$. Hence, from \cite[Lemma~3.1]{Comb01} 
we deduce that 
\begin{equation}
\label{e:sumabil}
\sum_{k\in\NN}\|T_kR_kx_k-x_k\|^2=\sum_{k\in\NN}\|\barq_k-x_k\|^2
<\pinf\quad\text{and}\quad
\sum_{k\in\NN}\|z_k-Q_kz_k\|^2<\pinf,
\end{equation}
and therefore $T_nR_nx_n-x_n=\barq_n-x_n\to0$ and $z_n-Q_nz_n\to0$.
Thus, it follows from \eqref{e:iter1} and the nonexpansivity of
$T_n$ that 
\begin{align}
\|z_n-x_n\|&=\|r_n-y_n\|\nonumber\\
&=\|\barr_n-\bary_n+e_n\|\nonumber\\ 
&\leq\|T_n\barq_n-T_nx_n\|+\|e_n\|\nonumber\\
&\leq \|\barq_n-x_n\|+\|e_n\|\nonumber\\
&\to 0.
\end{align}
Altogether, since \eqref{e:condconv} asserts that all the
weak limits of the sequence $(x_k)_{k\in\NN}$ are in $Z$,
the result follows from \cite[Theorem~3.8]{Comb01}.
\end{proof}

\section{Monotone inclusions with convex constraints}
\label{sec:3}
We consider the problem
\begin{equation}
\label{e:incmonmain}
\text{find}\quad x\in S\quad\text{such that}\quad 0\in Ax+Bx, 
\end{equation}
where $A\colon\HH\to 2^{\HH}$ and $B\colon\dom B\subset\HH\to\HH$
are maximally monotone, and $S\subset\HH$ is nonempty, closed, and
convex. When $B$ is cocoercive, $\dom B=\HH$, and $S=\HH$, 
\eqref{e:incmonmain} models wide variety of problems in nonlinear 
analysis, and it can be solved by the forward-backward splitting 
method \cite{Sico10,Smms05,Merc80,Tsen90,Tsen91}. However, in several 
applications these assumptions are very restrictive. If the 
cocoercivity of $B$ is relaxed to Lipschitz continuity, 
\eqref{e:incmonmain} can be solved by the modified forward-backward 
splitting in \cite{Tsen00}. We propose an extension of this method 
for solving \eqref{e:incmonmain} with a finite number of convex 
constraints. In addition, our method allows for errors in the 
computations of the operators involved.

\begin{notation}
For a set-valued operator $A\colon\HH\to 2^{\HH}$, 
$\dom A=\menge{x\in\HH}{Ax\neq\emp}$ is the domain of $A$,
$\zer A=\menge{x\in\HH}{0\in Ax}$ is its set of zeros, 
and $\gr A=\menge{(x,u)\in\HH\times\HH}{u\in Ax}$ 
is its graph. The operator $A$ is monotone if it satisfies,
for every $(x,u)$ and $(y,v)$ in $\gr A$, $\scal{x-y}{u-v}\geq0$,
and it is maximally monotone if its graph is not properly
contained in the graph of any other monotone operator acting on $\HH$.
In this case, the resolvent of $A$, $J_A=(\Id+A)^{-1}$, is well 
defined, single-valued, $\dom J_A=\HH$, and it is firmly 
nonexpansive. For every $\alpha\in\RR$, the lower level set at height
$\alpha$ of a function $f\colon\HH\to\RX$ is the closed convex set 
$\lev{\alpha}f=\menge{x\in\HH}{f(x)\leq\alpha}$ and the 
subdifferential of $f$ is the operator
\begin{equation}
\partial f\colon\HH\to 2^{\HH}\colon x\mapsto \menge{u\in\HH}
{(\forall y\in\HH)\quad\scal{y-x}{u}+f(x)\leq f(y)}.
\end{equation}
Now let $C$ be a nonempty subset of $\HH$. 
Then $\inte C$ is the interior of $C$ and if $C$ is nonempty, convex, 
and closed, then $P_C$ denotes the projector operator onto $C$, 
which, for every $x\in\HH$ satisfies 
$\|x-P_Cx\|=\min_{y\in C}\|x-y\|=d_C(x)$, 
where $d_C$ denotes the distance function of $C$.
For further background in monotone operator theory and convex 
analysis see \cite{Livre1}.
\end{notation}

\begin{problem}
\label{prob:tseng}
Let $A\colon\HH\to 2^{\HH}$ and 
$B\colon\dom B\subset\HH\to\HH$ be two maximally monotone operators 
such that $\dom A\subset\dom B$ and suppose that $A+B$ is maximally 
monotone (see \cite[Corollary~24.4]{Livre1} for some sufficient 
conditions). For every $i\in\{1,\ldots,m\}$, let
$f_i\colon\HH\to\RR$ be lower semicontinuous and convex, denote by 
$S=\lev{0}f_1\cap\cdots\cap\lev{0}f_m\neq\emp$, and
assume that $S\subset\dom B$ and that $B$ is $\chi$-Lipschitz 
continuous on $S\cup\dom A$, for some $\chi\in\RPP$.
The problem is to
\begin{equation}
\text{find}\quad x\in\HH\quad\text{such that}\quad
\begin{cases}
x\in \zer(A+B)\\
\:f_1(x)\hspace{.05cm}\leq 0\\
\hspace{1cm}\vdots\\
f_m(x)\leq 0.
\end{cases}
\end{equation}
\end{problem}

Problem~\ref{prob:tseng} models various applications to economics, 
traffic theory, Nash equilibrium problems, and network equilibrium 
problems among others (see \cite{Bert82,Dafe92,Facc03} and the 
references therein). 

In the particular case when $m=1$, $f_1=d_C$, and $C\subset\HH$ is 
a nonempty closed convex set, an algorithm for solving 
Problem~\ref{prob:tseng} is proposed in \cite{Tsen00}, 
without considering errors in the computations and assuming that 
$P_C$ is easily computable (see also \cite{SolS99} for an approach 
using enlargements of maximally monotone operators). However, 
since $P_S$ is not computable in general, Problem~\ref{prob:tseng}
can not be solved by this method. We propose an algorithm for 
solving Problem~\ref{prob:tseng} in which the constraints 
$f_1\leq0$,$\ldots$, $f_m\leq0$ are activated independently
and linearized, and where errors in the computation of the operators 
involved are permitted. For the implementation of this method 
we use the subgradient projector with respect to $f\in\Gamma_0(\HH)$, 
which is defined by
\begin{equation}
\label{e:projsub}
G\colon\HH\to\HH\colon x\mapsto 
\begin{cases}
x-\displaystyle{\frac{f(x)}{\|u\|^2}}u,\quad&\text{if}
\:f(x)>0;\\
x,&\text{otherwise,}
\end{cases}
\end{equation}
where $u\in\partial f(x)$, 
and the function $\operatorname{i}\colon\NN\to\{1,\ldots,m\}
\colon n\mapsto1+\operatorname{rem}(n-1,m)$,
where $\operatorname{rem}(\cdot,m)$ is the 
remainder function of division by $m$.
\begin{algorithm}
\label{algo:tseng}
For every $i\in\{1,\ldots,m\}$, denote by
$G_{i}\colon\HH\to\HH$ the subgradient projector with respect to 
$f_i$. Let $(e_{1,n})_{n\in\NN}$, 
$(e_{2,n})_{n\in\NN}$, and $(e_{3,n})_{n\in\NN}$ be sequences in 
$\HH$ such that $\sum_{n\in\NN}\|e_{1,n}\|<\pinf$, 
$\sum_{n\in\NN}\|e_{2,n}\|<\pinf$, and 
$\sum_{n\in\NN}\|e_{3,n}\|<\pinf$. Let 
$\varepsilon\in\left]0,1/(\chi+1)\right[$, let $(\gamma_n)_{n\in\NN}$
be a sequence in $\left[\varepsilon,(1-\varepsilon)/\chi\right]$,
let $x_0\in\dom B$, and let $(x_n)_{n\in\NN}$ be the sequence 
generated by the following routine.
\begin{align}
\label{e:tseng1}
(\forall n\in\NN)\quad
&\left\lfloor 
\begin{array}{l}
y_n=x_n-\gamma_n(Bx_n+e_{1,n})\\
q_n=J_{\gamma_nA}(y_n+e_{2,n})\\
r_n=q_n-\gamma_n(Bq_n+e_{3,n})\\
z_n=x_n-y_n+r_n\\
x_{n+1}=G_{\operatorname{i}(n)}\,z_n.
\end{array}
\right.
\end{align}
\end{algorithm}
\begin{remark}
In Algorithm~\ref{algo:tseng}, the sequences $(e_{1,n})_{n\in\NN}$ 
and $(e_{3,n})_{n\in\NN}$ represent errors in the computation 
of the operator $B$. In addition, we suppose that the resolvents
$(J_{\gamma_nA})_{n\in\NN}$ can be computed approximatively by
solving, for every $n\in\NN$, the perturbed inclusion
\begin{equation}
\text{find}\quad q\in\HH\quad\text{such that}\quad 
y_n-q+e_{2,n}\in\gamma_nAq.
\end{equation}
\end{remark}

\begin{proposition}
\label{t:Tseng}
Suppose that 
\begin{equation}
\bigcup_{i=1}^m\ran G_i\subset\dom B\quad\text{and}\quad
S\cap\zer(A+B)\neq\emp.
\end{equation}
Then 
Algorithm~\ref{algo:tseng} generates an infinite
orbit $(x_n)_{n\in\NN}$ which converges weakly to a solution to 
Problem~\ref{prob:tseng}.
\end{proposition}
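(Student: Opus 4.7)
The plan is to recognize Algorithm~\ref{algo:tseng} as a specialization of Algorithm~\ref{algo:1} and invoke Theorem~\ref{t:0}. Set
\begin{equation}
T_n=J_{\gamma_nA},\quad R_n=\Id-\gamma_nB,\quad Q_n=G_{\operatorname{i}(n)},\quad\lambda_n\equiv 1,
\end{equation}
with error sequences $a_n=-\gamma_ne_{1,n}$, $b_n=J_{\gamma_nA}(y_n+e_{2,n})-J_{\gamma_nA}y_n$, and $c_n=-\gamma_ne_{3,n}$. First I would verify the structural hypotheses: $T_n$ is firmly nonexpansive as the resolvent of a maximally monotone operator; the monotonicity of $B$ makes $R_n$ pseudo contractive on $\dom B$; the operator $\Id-R_n=\gamma_nB$ is $\beta_n$-Lipschitz with $\beta_n=\gamma_n\chi\leq 1-\varepsilon$; and $T_nR_nx=x$ is, by definition of the resolvent, equivalent to $0\in Ax+Bx$, so the solution set of Problem~\ref{prob:1} under this identification is exactly $S\cap\zer(A+B)$, which is nonempty by hypothesis. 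Each $G_i$ is the projection onto the closed affine half-space $\menge{y\in\HH}{\scal{y-x}{w}+f_i(x)\leq 0}$ (with $w\in\partial f_i(x)$) when $f_i(x)>0$, and the identity otherwise; the subgradient inequality applied to any $s\in S$ shows this half-space contains $S$. Summability of $(a_n)$ and $(c_n)$ is immediate from boundedness of $(\gamma_n)$, and summability of $(b_n)$ follows from the nonexpansivity of $J_{\gamma_nA}$.

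Second, I would check that the orbit is well defined for every $n\in\NN$. Since $\dom J_{\gamma_nA}=\HH$, the quantity $q_n$ always exists; the hypothesis $\bigcup_{i=1}^m\ran G_i\subset\dom B$ together with $x_0\in\dom B$ gives $x_n\in\dom R_n$ by induction, and $q_n\in\ran J_{\gamma_nA}\subset\dom A\subset\dom B=\dom R_n$ ensures the inner domain test passes. Hence Algorithm~\ref{algo:1} runs indefinitely on this data.

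The main work is verifying the implication \eqref{e:condconv}. Suppose $x_{k_n}\weakly x$, $x_n-T_nR_nx_n\to 0$, $z_n-x_n\to 0$, and $z_n-Q_nz_n\to 0$. Setting $u_n=J_{\gamma_nA}(x_n-\gamma_nBx_n)$, the definition of the resolvent yields
\begin{equation}
v_n:=\frac{x_n-u_n}{\gamma_n}-Bx_n+Bu_n\in(A+B)u_n.
\end{equation}
Since $\gamma_n\geq\varepsilon$, $u_n-x_n\to 0$, and the $\chi$-Lipschitz continuity of $B$ yield $v_n\to 0$; moreover $u_{k_n}\weakly x$, so the weak-strong closure of the graph of the maximally monotone operator $A+B$ forces $0\in(A+B)x$. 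For the constraint, for each $i\in\{1,\ldots,m\}$ I would extract from $(k_n)$ a further subsequence $(k_n^i)$ with $\operatorname{i}(k_n^i)=i$, which is possible because $\operatorname{i}$ cycles periodically; since $z_n-x_n\to 0$, we still have $z_{k_n^i}\weakly x$. Formula \eqref{e:projsub} gives, whenever $f_i(z_n)>0$, $\|z_n-G_iz_n\|=f_i(z_n)/\|w_n\|$ for some $w_n\in\partial f_i(z_n)$. Because $f_i\colon\HH\to\RR$ is convex and continuous, $\partial f_i$ is locally bounded on the bounded set $(z_{k_n^i})$, so $(w_{k_n^i})$ is bounded; combined with $z_n-Q_nz_n\to 0$ this forces $(f_i(z_{k_n^i}))^+\to 0$. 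Weak lower semicontinuity of $f_i$ then yields $f_i(x)\leq\liminf_n f_i(z_{k_n^i})\leq 0$, hence $x\in S$. Thus $x\in S\cap\zer(A+B)=Z$ and Theorem~\ref{t:0} yields the claim.

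The principal obstacle is the constraint verification: coordinating a single given weak subsequence with the cyclic activation of the $m$ constraints, and controlling $\|w_n\|$ via local boundedness of each $\partial f_i$. The inclusion part essentially adapts Tseng's argument to bounded $\gamma_n$ and to the perturbed iteration.
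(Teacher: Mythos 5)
Your reduction to Algorithm~\ref{algo:1} and the verification of $0\in(A+B)x$ follow the paper's route essentially verbatim: the same operators $T_n=J_{\gamma_nA}$, $R_n=\Id-\gamma_nB$, the same error identification, the same domain bookkeeping, and the same appeal to the sequential weak--strong closedness of $\gr(A+B)$. The gap is in the constraint step. You propose to ``extract from $(k_n)$ a further subsequence $(k_n^i)$ with $\operatorname{i}(k_n^i)=i$,'' justified ``because $\operatorname{i}$ cycles periodically.'' This extraction is not possible in general: $(k_n)$ is an arbitrary subsequence along which $x_{k_n}\weakly x$, and nothing prevents it from consisting, say, entirely of indices congruent to $1$ modulo $m$, in which case no subsequence of $(k_n)$ activates constraint $i=2$. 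Periodicity of $\operatorname{i}$ on all of $\NN$ gives no control over the residues of the particular indices $k_n$.

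The repair --- and what the paper actually does --- is to leave $(k_n)$ alone and instead pick, for each $n$, a nearby index $j_n$ with $k_n\leq j_n\leq k_n+m$ and $\operatorname{i}(j_n)=i$; such $j_n$ exists by periodicity, but $(j_n)$ is not a subsequence of $(k_n)$, so $x_{j_n}\weakly x$ is not automatic. One must first establish $\|x_{n+1}-x_n\|=\|Q_nz_n-x_n\|\leq\|Q_nz_n-z_n\|+\|z_n-x_n\|\to0$ from the hypotheses of \eqref{e:condconv}, then telescope over at most $m$ steps to get $x_{j_n}-x_{k_n}\to0$, hence $z_{j_n}\weakly x$; after that your argument ($\max\{0,f_i(z_{j_n})\}\to0$ plus weak lower semicontinuity of $f_i$) goes through. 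You never derive $\|x_{n+1}-x_n\|\to0$, and without it the constraint verification cannot be completed. (Incidentally, your explicit appeal to boundedness of the subgradients $w_n$ in order to pass from $\|z_n-G_iz_n\|\to0$ to $\max\{0,f_i(z_n)\}\to0$ addresses a point the paper leaves implicit; that part is fine once the correct indices $j_n$ are in place.)
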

\begin{proof}
Set 
\begin{equation}
\label{e:oper2}
(\forall n\in\NN)\quad\beta_n=\gamma_n\chi,\quad
T_n=J_{\gamma_n A},\quad\text{and}\quad
R_n=\Id-\gamma_n B.
\end{equation}
Note that $(\beta_n)_{n\in\NN}$ is a sequence in 
$\left]0,1-\varepsilon\right]$ and, for every $n\in\NN$, 
$T_n$ is firmly nonexpansive and $\Id-R_n=\gamma_nB$ is 
$\beta_n$--Lipschitz-continuous and monotone. Hence, it follows from 
\cite[Theorem~1]{BroP67} that the operators $(R_n)_{n\in\NN}$ are 
pseudo contractive. 
In addition, note that
$x\in \zer(A+B)\,\, \Leftrightarrow
\:(\forall n\in\NN)\quad x-\gamma_nBx\in x+\gamma_nAx\;
\Leftrightarrow
\;(\forall n\in\NN)\quad x\in\Fix T_nR_n$.
Altogether, we deduce that Problem~\ref{prob:tseng} is
a particular case of Problem~\ref{prob:1} and 
\begin{equation}
Z=S\cap\bigcap_{n\in\NN}\Fix T_nR_n=S\cap\zer(A+B)\neq\emp.
\end{equation}
Now let us prove that Algorithm~\ref{algo:tseng} is a particular 
case of Algorithm~\ref{algo:1}. Set 
\begin{equation}
\label{e:erreure}
(\forall n\in\NN)\quad
\begin{cases}
a_n=-\gamma_ne_{1,n}\\
b_n=J_{\gamma_n A}(y_n+e_{2,n})-J_{\gamma_n A}y_n\\
c_n=-\gamma_ne_{3,n}
\end{cases}
\quad\text{and}\quad
\begin{cases}
\lambda_n=1\\
Q_n=G_{\operatorname{i}(n)}.
\end{cases}
\end{equation}
Then, since $\sup_{n\in\NN}\gamma_n<\chi^{-1}$, we have 
$\sum_{n\in\NN}\|a_n\|<\pinf$ and $\sum_{n\in\NN}\|c_n\|<\pinf$. 
Moreover, from the nonexpansivity of $(J_{\gamma_nA})_{n\in\NN}$, we 
deduce that $\sum_{n\in\NN}\|b_n\|<\pinf$, and, for every $x\in\HH$ and 
$n\in\NN$, $Q_nx$ is the projection onto the closed affine half-space
$\menge{y\in\HH}{\scal{x-y}{u}\geq f_{\operatorname{i}(n)}(x)}$, for
some $u\in\partial f_{\operatorname{i}(n)}(x)$, 
which contains $\lev{0}f_{\operatorname{i}(n)}\supset S$. On the other hand,
$x_0\in\dom B$ and since, for every $i\in\{1,\ldots, m\}$, 
$\ran G_i\subset\dom B$, it follows from \eqref{e:tseng1} that, 
for every $n\in\NN\setminus\{0\}$, $x_n\in\dom B$. In addition, 
$q_n=J_{\gamma_nA}(y_n+e_{2,n})\in\dom A\subset\dom B$. Altogether,
from \eqref{e:oper2} and \eqref{e:erreure}, we deduce that
Algorithm~\ref{algo:tseng} is a particular case of 
Algorithm~\ref{algo:1} and that it generates an infinite orbit 
$(x_n)_{n\in\NN}$.

Let us prove that condition \eqref{e:condconv} holds. Suppose
that $x_{k_n}\weakly x$, $x_n-T_nR_nx_n\to 0$, $z_n-x_n\to0$, 
$z_n-Q_nz_n\to0$, and, for every $n\in\NN$, denote by $p_n=T_nR_nx_n$. 
Hence, $p_{k_n}\weakly x$ and from \eqref{e:oper2} we obtain,
for every $n\in\NN$,
\begin{align}
\label{e:eqclosed}
p_n=T_nR_nx_n
\quad&\Leftrightarrow\quad x_n-\gamma_nBx_n\in p_n+\gamma_nAp_n
\nonumber\\
\quad&\Leftrightarrow\quad \frac{1}{\gamma_n}(x_n-p_n)-Bx_n
\in Ap_n\nonumber\\
&\Leftrightarrow\quad \frac{1}{\gamma_n}(x_n-p_n)+Bp_n-Bx_n
\in (A+B)p_n.
\end{align}
Now, since $A+B$ is maximally monotone, from 
\cite[Proposition~20.33]{Livre1}, its graph is
sequentially weak-strong closed. Therefore, since 
$x_{k_n}-p_{k_n}\to0$, $\|Bp_{k_n}-Bx_{k_n}\|\leq\chi\|x_{k_n}-
p_{k_n}\|\to 0$, $\gamma_{k_n}\geq \varepsilon>0$, 
$p_{k_n}\weakly x$, we conclude from \eqref{e:eqclosed} that 
$x\in\zer(A+B)$. Now let us prove that, for every 
$i\in\{1,\ldots,m\}$, $f_i(x)\leq 0$. Fix $i\in\{1,\ldots,m\}$ and, 
for every $n\in\NN$, let $j_n\in\NN$ such that $k_n\leq j_n\leq k_n+m$ 
and $\operatorname{i}(j_n)=i$. We deduce from $z_n-x_n\to0$ 
and $z_n-Q_nz_n\to0$ that, for every $n\in\NN$, $\|x_{n+1}-x_n\|=
\|Q_nz_n-x_n\|\leq\|Q_nz_n-z_n\|+\|z_n-x_n\|\to0$. Therefore,
\begin{equation}
(\forall n\in\NN)\quad
\|x_{j_n}-x_{k_n}\|\leq\sum_{\ell=k_n}^{j_n-1}\|x_{\ell+1}-x_{\ell}\|
\leq m\underset{k_n\leq\ell\leq k_n+m}{\max}\|x_{\ell+1}-x_{\ell}\|
\to 0
\end{equation}
and hence it follows from $z_{j_n}-x_{j_n}\to0$ and $x_{k_n}\weakly x$
that $z_{j_n}\weakly x$. 
Note that, from \eqref{e:erreure} and \eqref{e:projsub} we have,
for some $u_{j_n}\in\partial f_i(z_{j_n})$,
\begin{equation}
(\forall n\in\NN)\quad
Q_{j_n}z_{j_n}-z_{j_n}=
\begin{cases}
-\displaystyle{\frac{f_i(z_{j_n})}{\|u_{j_n}\|^2}u_{j_n}},
\quad&\text{if}\:f_i(z_{j_n})>0;\\
0,&\text{otherwise},
\end{cases}
\end{equation}
and, since $\|Q_{j_n}z_{j_n}-z_{j_n}\|\to0$, we deduce that
$\max\{0,f_i(z_{j_n})\}\to 0$. Thus, it follows from 
$z_{j_n}\weakly x$ that $f_i(x)\leq \varliminf\,f_i(z_{j_n})\leq 
\varliminf\,\max\{0,f_i(z_{j_n})\}= 0$,
and hence $x\in \lev{0}f_i$. We conclude that $x\in Z$ and 
the result follows from Theorem~\ref{t:0}.
\end{proof}

\begin{remark}
Let us consider the particular case of Theorem~\ref{t:Tseng}
obtained when $e_{1,n}\equiv e_{2,n}\equiv 
e_{3,n}\equiv0$, $m=1$, and $f_1=d_C$, where $C\subset\HH$ is a nonempty 
closed convex set. Then, since $G_1=P_C$, Algorithm~\ref{algo:tseng} 
reduces to the method proposed in \cite{Tsen00}. Moreover, since
$S=C$, note that the assumption $\ran G_1\subset\dom B$ is equivalent 
to $S\subset\dom B$, which was already assumed in Problem~\ref{prob:tseng}.
\end{remark}

\section{Equilibrium problems with convex constraints}
\label{sec:4}
We consider the problem
\begin{equation}
\label{e:eqprobl}
\text{find}\quad x\in C\quad\text{such that}\quad 
(\forall y\in C)\quad F(x,y)\geq0, 
\end{equation}
where $C$ and $F$ satisfy the following assumption.
\begin{assumption}
\label{a:1}
$C$ is a nonempty closed convex subset of $\HH$ and 
$F\colon C^2\to\RR$ satisfies the following.
\begin{enumerate}
\item 
\label{a:1i}
$(\forall x\in C)\quad F(x,x)=0$.
\item 
\label{a:1ii}
$(\forall (x,y)\in C^2)\quad F(x,y)+F(y,x)\leq0$.
\item 
\label{a:1iii} 
For every $x$ in $C$, $F(x,\cdot)\colon C\to\RR$ is lower 
semicontinuous and convex.
\item 
\label{a:1iv} 
$(\forall (x,y,z)\in C^3)\quad \underset{\varepsilon\to 0^+}
{\varlimsup}\:F((1-\varepsilon)x+\varepsilon z,y)\leq F(x,y)$.
\end{enumerate}
\end{assumption}
We are interested in solving a more general problem than 
\eqref{e:eqprobl}, which involves a finite 
or a countable infinite number of convex constraints. It
will be presented after the following preliminaries.
\begin{notation}
The resolvent of $F\colon C^2\to\RR$ is the set valued
operator
\begin{equation}
\label{e:J_F}
J_F\colon \HH\to 2^C\colon x\mapsto\menge{z\in C}
{(\forall y\in C)\quad F(z,y)+\scal{z-x}{y-z}\geq 0}
\end{equation}
and, for every $\delta\in\RPP$, the $\delta$--resolvent of 
$F\colon C^2\to\RR$ is the set valued
operator
\begin{equation}
\label{e:J_Fdelta}
J_F^{\delta}\colon \HH\to 2^C\colon x\mapsto\menge{z\in C}
{(\forall y\in C)\quad F(z,y)+\scal{z-x}{y-z}\geq -\delta}.
\end{equation}
\begin{lemma}
\label{l:321}
Suppose that $F\colon C^2\to\RR$ satisfies Assumption~\ref{a:1}.
Then the following hold.
\begin{enumerate}
\item\label{l:321i} $\dom J_F=\HH$.
\item\label{l:321ii} $J_F$ is single-valued and firmly nonexpansive.
\item\label{l:321iii} $(\forall x\in\HH)(\forall\delta\in\RPP)
\quad J_Fx\in J_F^{\delta}x$.
\item\label{l:321iv} $(\forall x\in\HH)(\forall\delta\in\RPP)
\quad J_F^{\delta}x\subset B(J_Fx;\sqrt{\delta})$.
\end{enumerate}
\end{lemma}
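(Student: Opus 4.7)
The plan is to prove the four items in the order (iii), (iv), (ii), (i), since the first three are short consequences of the defining inequalities of $J_F$ and $J_F^{\delta}$ combined with Assumption~\ref{a:1}(ii), while (i) is the only substantial point and constitutes the main obstacle.

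For (iii), fix $x\in\HH$ and $\delta\in\RPP$: any $z\in J_Fx$ trivially satisfies $F(z,y)+\scal{z-x}{y-z}\geq 0\geq-\delta$ for every $y\in C$, hence $z\in J_F^{\delta}x$. For (iv), I would pick $z\in J_F^{\delta}x$ and, granting (i) and (ii), set $z^{\star}=J_Fx$. Substituting $y=z^{\star}$ in the inequality defining $z\in J_F^{\delta}x$ and $y=z$ in the inequality defining $z^{\star}=J_Fx$, then adding the two, the cross inner products telescope to $-\|z-z^{\star}\|^2$, and Assumption~\ref{a:1}(ii) applied to $F(z,z^{\star})+F(z^{\star},z)\leq 0$ yields $\|z-z^{\star}\|^2\leq\delta$, which is exactly $z\in B(J_Fx;\sqrt{\delta})$.

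For (ii), I would take $z_1\in J_Fx_1$ and $z_2\in J_Fx_2$, substitute $y=z_{3-i}$ in each defining inequality, sum, and use Assumption~\ref{a:1}(ii) to deduce $\scal{x_1-x_2}{z_1-z_2}\geq\|z_1-z_2\|^2$. By \eqref{e:firmnonexp2} this is firm nonexpansivity of the a priori set-valued mapping $x\mapsto J_Fx$; in particular, taking $x_1=x_2$ forces $z_1=z_2$, so $J_F$ is single-valued.

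For (i), given $x\in\HH$, I would introduce the auxiliary bifunction $\phi_x\colon C^2\to\RR\colon(u,v)\mapsto F(u,v)+\scal{u-x}{v-u}$ and verify that it inherits the four properties of Assumption~\ref{a:1} from $F$, the key strengthening being $\phi_x(u,v)+\phi_x(v,u)\leq-\|u-v\|^2$, which is strictly monotone off the diagonal and additionally yields the coercivity $\phi_x(u,v)\to+\infty$ as $\|v\|\to\pinf$ for any fixed $u\in C$, allowing the search to be localized to a bounded subset of $C$. A classical existence result of Blum--Oettli type (a KKM/Ky~Fan argument applied to $\phi_x$) then produces some $z\in C$ with $\phi_x(z,y)\geq 0$ for every $y\in C$, i.e.\ $z\in J_Fx$, so $\dom J_F=\HH$. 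This is the main technical obstacle, and I would prefer to invoke the existing equilibrium-problem existence theorem rather than re-derive the underlying KKM machinery here.
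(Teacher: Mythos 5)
Your arguments for \ref{l:321iii} and \ref{l:321iv} coincide with the paper's: for \ref{l:321iv} the paper likewise adds the inequality defining $w\in J_F^{\delta}x$ at $y=J_Fx$ to the one defining $J_Fx$ at $y=w$ and invokes Assumption~\ref{a:1}\ref{a:1ii} to get $\|J_Fx-w\|^2\leq\delta$. The only real difference is in \ref{l:321i} and \ref{l:321ii}: the paper simply cites \cite[Lemma~2.12]{ComH05}, whereas you prove \ref{l:321ii} directly (correctly, via the same substitute-and-add device, landing on \eqref{e:firmnonexp2}) and sketch \ref{l:321i} by perturbing $F$ to $\phi_x(u,v)=F(u,v)+\scal{u-x}{v-u}$ and appealing to a Blum--Oettli/KKM existence theorem --- which is in fact how the cited lemma is proved, so your route is a self-contained unpacking of the paper's citation rather than a different method. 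One small correction in the sketched part: the coercivity you want is not $\phi_x(u,v)\to+\infty$ as $\|v\|\to\infty$ for fixed $u$ (this can fail, since $\phi_x(u,\cdot)$ is only convex lsc plus an affine term); the useful consequence of $\phi_x(u,v)+\phi_x(v,u)\leq-\|u-v\|^2$ is rather that for a fixed $y_0\in C$ one has $\phi_x(u,y_0)\leq-\phi_x(y_0,u)-\|u-y_0\|^2\to-\infty$ as $\|u\|\to\infty$ (minorizing $F(y_0,\cdot)$ by an affine function), which is the standard coercivity hypothesis that confines candidate solutions to a bounded set. With that adjustment, and granting the quoted existence theorem, your proof is complete.
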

\begin{proof}
\ref{l:321i}\&\ref{l:321ii}: \cite[Lemma~2.12]{ComH05}.
\ref{l:321iii}: This follows from~\ref{l:321ii}, \eqref{e:J_F},
and \eqref{e:J_Fdelta}. 
\ref{l:321iv}: Fix $x\in\HH$ and $\delta\in\RPP$, and let 
$w\in J_F^{\delta}x$. We deduce from \eqref{e:J_F} and 
\eqref{e:J_Fdelta} that
$F(J_Fx,w)+\scal{J_Fx-x}{w-J_Fx}\geq0$ and
$F(w,J_Fx)+\scal{w-x}{J_Fx-w}\geq-\delta$, respectively.
Adding both inequalities we obtain
$F(w,J_Fx)+F(J_Fx,w)-\|J_Fx-w\|^2\geq-\delta$.
Hence, it follows from Assumption~\ref{a:1ii} that 
$\|J_Fx-w\|^2\leq\delta$, which yields the result.
\end{proof}
\end{notation}

\begin{problem}
\label{prob:eq}
Let $F$ be a function satisfying 
Assumption~\ref{a:1}. Let $(S_i)_{i\in I}$ be a countable (finite or 
countable infinite) family of closed convex subsets of $\HH$ such 
that $S=\cap_{i\in I}S_i\neq\emp$. Let $B\colon\dom B\subset\HH\to\HH$ 
be a monotone and $\chi$--Lipschitz continuous operator such that 
$C\subset\dom B$, and suppose that
\begin{equation}
\label{e:moyendom}
\bigcup_{i\in I}S_i\subset\inte\dom B.
\end{equation}
The problem is to
\begin{equation}
\label{e:probeq}
\text{find}\quad x\in S\quad\text{such that}\quad 
(\forall y\in C)\quad F(x,y)+\scal{Bx}{y-x}\geq 0.
\end{equation}
\end{problem}

Problem~\ref{prob:eq} models a wide variety of problems
including complementarity problems, optimization problems,
feasibility problems, Nash equilibrium problems, variational
inequalities, and fixed point problems 
\cite{Comb01,ComH05,Baus96,Bian96,Blum94,Oett97}.

In the literature, there exist some splitting algorithms
for solving the equilibrium problem
\begin{equation}
\label{e:probeqex}
\text{find}\quad x\in C\quad\text{such that}\quad 
(\forall y\in C)\quad F_1(x,y)+F_2(x,y)\geq 0,
\end{equation}
where $F_1$ and $F_2$ satisfy Assumption~\ref{a:1}. 
These methods take advantage of the properties of $F_1$ and $F_2$ 
separately. For instance, sequential and parallel splitting algorithms 
are proposed in \cite{Moud09}, where the resolvents 
$J_{F_1}$ and $J_{F_2}$ are used. The ergodic convergence to a 
solution to \eqref{e:probeqex} is established without additional 
assumptions. However, when $F_1=F$ and $F_2\colon(x,y)\mapsto
\scal{Bx}{y-x}$ we have $J_{F_2}=J_B=(\Id+B)^{-1}$ 
\cite[Lemma~2.15(i)]{ComH05}, which is often difficult to compute, even in the 
linear case. Moreover, the ergodic method proposed in \cite{Moud09} 
involves vanishing parameters that leads to numerical instabilities, 
which make it of limited use in applications. 
In \cite{ComH05,Moud02} a different approach is developed
to overcome this disadvantage when $B$ is cocoercive. In their
methods, the operator $B$ is computed explicitly and the weakly 
convergence to a solution to \eqref{e:probeq} when $S=C$ 
is demonstrated.

In this section we propose the following non-ergodic algorithm for 
solving the general case considered in Problem~\ref{prob:eq}. This 
approach can deal with errors in the computations of the operators
involved. The convergence of the proposed method is a consequence of 
Theorem~\ref{t:0}.
\begin{algorithm}
\label{algo:eq}
Let $(I_n)_{n\in\NN}$ be a sequence of finite subsets of $I$,
let $(e_{1,n})_{n\in\NN}$ and $(e_{2,n})_{n\in\NN}$ be sequences 
in $\HH$ such that 
$\sum_{n\in\NN}\|e_{1,n}\|<\pinf$ and  
$\sum_{n\in\NN}\|e_{2,n}\|<\pinf$, and let $(\delta_n)_{n\in\NN}$
a sequence in $\RPP$ such that $\sum_{n\in\NN}\sqrt{\delta_n}<\pinf$.
Let $\varepsilon\in\left]0,1/(\chi+1)\right[$, let 
$(\gamma_n)_{n\in\NN}$ be a sequence in 
$\left[\varepsilon,(1-\varepsilon)/\chi\right]$, 
let $\cup_{n\in\NN}\{\omega_{i,n}\}_{i\in I_n}\subset
\left[\varepsilon,1\right]$ be
such that, for every $n\in\NN$, $\sum_{i\in I_n}\omega_{i,n}=1$,
let $x_0\in\dom B$, and let $(x_n)_{n\in\NN}$ be the sequence 
generated by the following routine.
\begin{align}
\label{e:eq1}
(\forall n\in\NN)\quad
&\left\lfloor 
\begin{array}{l}
y_n=x_n-\gamma_n(Bx_n+e_{1,n})\\
q_n\in J_{\gamma_nF}^{\delta_n}y_n\\
r_n=q_n-\gamma_n(Bq_n+e_{2,n})\\
z_n=x_n-y_n+r_n\\
x_{n+1}=\sum_{i\in I_n}
\omega_{i,n}P_{S_i}z_n.
\end{array}
\right.
\end{align}
\end{algorithm}
\begin{remark}
In Algorithm~\ref{algo:eq}, the sequences $(e_{1,n})_{n\in\NN}$ and
$(e_{2,n})_{n\in\NN}$ represent errors in the computation of
the operator $B$. On the other hand, it follows from 
\eqref{e:eq1} and \eqref{e:J_Fdelta} that, for every $n\in\NN$, 
$q_n$ is a solution to
\begin{equation}
\text{find}\quad q\in C\quad\text{such that}\quad 
(\forall y\in C)\quad F(q,y)+\scal{y-y_n}{y-q}\geq-\delta_n.
\end{equation}
Thus, we obtain from \eqref{e:J_F} that $q_n$ can be interpreted as 
an approximate computation of the resolvent $J_{\gamma_nF}y_n$. 
\end{remark}

\begin{proposition}
\label{t:eq}
Suppose that there exist strictly positive integers $(M_i)_{i\in I}$ 
and $N$ such that
\begin{equation}
\label{e:admissib}
(\forall (i,n)\in I\times\NN)\quad i\in \bigcup_{k=n}^{n+M_i-1}I_k
\quad \text{and}\quad 1\leq \card I_n\leq N,
\end{equation}
and that Problem~\ref{prob:eq} admits at least one solution.
Then Algorithm~\ref{algo:eq} generates an infinite orbit 
$(x_n)_{n\in\NN}$ which converges weakly to a solution to 
Problem~\ref{prob:eq}.
\end{proposition}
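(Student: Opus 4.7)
The plan is to realize Algorithm~\ref{algo:eq} as an instance of Algorithm~\ref{algo:1} and then verify condition~\eqref{e:condconv}, so that Theorem~\ref{t:0} yields the conclusion. I would set $T_n=J_{\gamma_n F}$ and $R_n=\Id-\gamma_n B$. Lemma~\ref{l:321}\ref{l:321ii} makes $T_n$ firmly nonexpansive with $\dom T_n=\HH$, while the monotonicity and $\chi$-Lipschitz continuity of $B$, together with $\gamma_n\leq(1-\varepsilon)/\chi$ and \cite[Theorem~1]{BroP67}, show that $\Id-R_n=\gamma_n B$ is $\beta_n$-Lipschitz with $\beta_n=\gamma_n\chi\in\left]0,1-\varepsilon\right]$ and that $R_n$ is pseudo contractive. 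A direct manipulation of the defining inequality of $J_{\gamma_n F}$ identifies $\Fix(T_nR_n)$ with the set of $x\in C$ satisfying $F(x,y)+\scal{Bx}{y-x}\geq 0$ for every $y\in C$, so that $Z=S\cap\bigcap_n\Fix(T_nR_n)$ equals the solution set of Problem~\ref{prob:eq} and is nonempty by hypothesis.

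Next I would check the error model. Set $a_n=-\gamma_n e_{1,n}$, $c_n=-\gamma_n e_{2,n}$, and $b_n=q_n-J_{\gamma_n F}y_n$; summability of $(a_n)$ and $(c_n)$ follows from $\gamma_n\leq1/\chi$, and Lemma~\ref{l:321}\ref{l:321iv} applied to $\gamma_n F$ gives $\|b_n\|\leq\sqrt{\delta_n}$, hence $\sum\|b_n\|<\pinf$. The nontrivial identification is that of $Q_n$: letting $w_n=\sum_{i\in I_n}\omega_{i,n}P_{S_i}z_n=x_{n+1}$, a weighted sum over $i\in I_n$ of the characterization $\scal{z_n-P_{S_i}z_n}{P_{S_i}z_n-s}\geq 0$, valid for every $s\in S$, produces
\begin{equation*}
\scal{z_n-w_n}{z_n-s}\geq\sum_{i\in I_n}\omega_{i,n}\|z_n-P_{S_i}z_n\|^2\geq\|z_n-w_n\|^2,
\end{equation*}
which shows that $S$ lies in the half-space $H_n=\{y\in\HH:\scal{z_n-w_n}{y-w_n}\leq 0\}$ and that $w_n=P_{H_n}z_n$. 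Thus $Q_n=P_{H_n}$ and $\lambda_n=1$ match Algorithm~\ref{algo:1}. The orbit is infinite because $q_n\in C\subset\dom B$ and $x_{n+1}$ is a convex combination of points in $\inte\dom B$ by~\eqref{e:moyendom}.

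To verify~\eqref{e:condconv} I would set $p_n=T_nR_nx_n$; since $x_n-p_n\to 0$, $p_{k_n}\weakly x$, and by weak closedness of $C$ we have $x\in C$. The defining inequality of $p_n=J_{\gamma_n F}(x_n-\gamma_n Bx_n)$, divided by $\gamma_n$, reads
\begin{equation*}
(\forall y\in C)\quad F(p_n,y)+\scal{Bx_n}{y-p_n}+\gamma_n^{-1}\scal{p_n-x_n}{y-p_n}\geq 0.
\end{equation*}
Using Assumption~\ref{a:1}\ref{a:1ii} (to replace $F(p_n,y)$ by $-F(y,p_n)$) and the monotonicity of $B$ (to bound $\scal{Bx_n}{y-p_n}\leq\scal{By}{y-p_n}+\scal{Bx_n-By}{x_n-p_n}$), and then passing to the $\varliminf$ along $k_n$ — invoking $\gamma_n\geq\varepsilon$, the Lipschitz bound on $B$, the strong convergence $x_{k_n}-p_{k_n}\to 0$, and the weak lower semicontinuity of $F(y,\cdot)$ granted by Assumption~\ref{a:1}\ref{a:1iii} — yields the Minty-type inequality $F(y,x)+\scal{By}{x-y}\leq 0$ for every $y\in C$. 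A classical linearization with $y_t=(1-t)x+ty$, bounding $F(y_t,x)\geq-\frac{t}{1-t}F(y_t,y)$ via Assumption~\ref{a:1}\ref{a:1i}--\ref{a:1iii} and then letting $t\to 0^+$ through the hemicontinuity Assumption~\ref{a:1}\ref{a:1iv}, converts this back into $F(x,y)+\scal{Bx}{y-x}\geq 0$.

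For $x\in S$ I would follow the pattern at the end of Proposition~\ref{t:Tseng}: $\|x_{n+1}-x_n\|\leq\|Q_nz_n-z_n\|+\|z_n-x_n\|\to 0$ combined with~\eqref{e:admissib} produces, for each fixed $i\in I$, indices $j_n$ with $k_n\leq j_n\leq k_n+M_i-1$, $i\in I_{j_n}$, and $z_{j_n}\weakly x$. The key inequality of the reduction step then gives $\omega_{i,j_n}\|z_{j_n}-P_{S_i}z_{j_n}\|^2\leq\|z_{j_n}-Q_{j_n}z_{j_n}\|\,\|z_{j_n}-s\|$ for any fixed $s\in S$; since $\omega_{i,j_n}\geq\varepsilon$ and $(z_{j_n})$ is bounded, this forces $P_{S_i}z_{j_n}\weakly x$ and hence $x\in S_i$ by weak closedness. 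I expect the Minty-to-equilibrium conversion to be the only delicate step, since it is the place where Assumption~\ref{a:1}\ref{a:1iv} and the careful juggling of weak/strong limits is essential; everything else is a straightforward adaptation of the reduction used in Proposition~\ref{t:Tseng}, after which Theorem~\ref{t:0} concludes.
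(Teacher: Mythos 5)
Your proposal is correct and follows essentially the same route as the paper: the same reduction to Algorithm~\ref{algo:1} with $T_n=J_{\gamma_nF}$, $R_n=\Id-\gamma_nB$, $b_n=q_n-J_{\gamma_nF}y_n$ controlled by Lemma~\ref{l:321}\ref{l:321iv}, and $Q_n=\sum_{i\in I_n}\omega_{i,n}P_{S_i}$ identified as a half-space projector, followed by the same two-part verification of \eqref{e:condconv}. Your only departures are cosmetic — you run the Minty/linearization argument on $F$ and $B$ separately where the paper bundles them into $G(x,y)=F(x,y)+\scal{Bx}{y-x}$ and invokes \cite[Lemma~2.15(i)]{ComH05}, and you conclude $x\in S_i$ from weak closedness of $S_i$ rather than demiclosedness of $\gr(\Id-P_{S_i})$ — both equally valid.
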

\begin{proof}
First, let us prove that Problem~\ref{prob:eq} is a particular case
of Problem~\ref{prob:1}. Set
\begin{equation}
\label{e:opeq}
(\forall n\in\NN)\quad \beta_n=\gamma_n\chi,\:\: T_n=J_{\gamma_n F},
\:\:\text{and}\:\: R_n=\Id-\gamma_n B.
\end{equation}

Note that $(\beta_n)_{n\in\NN}$ is a sequence in 
$\left]0,1-\varepsilon\right]$ and, for every $n\in\NN$, 
$T_n$ is firmly nonexpansive \cite[Lemma~2.12]{ComH05} 
and $\Id-R_n=\gamma_nB$ is $\beta_n$--Lipschitz-continuous and 
monotone. Hence, it follows from \cite[Theorem~1]{BroP67} that 
the operators $(R_n)_{n\in\NN}$ are pseudo contractive. 
In addition, we deduce from \eqref{e:J_F} and \eqref{e:opeq} 
that $(\forall n\in\NN)\quad x\in\Fix T_nR_n\,\,
\Leftrightarrow\,\,(\forall n\in\NN)(\forall y\in C)\,\, 
\gamma_nF(x,y)+\scal{x-R_nx}{y-x}\geq0\,\,\Leftrightarrow\,\,
(\forall y\in C)\,\,F(x,y)+\scal{Bx}{y-x}\geq0$. Altogether, 
we deduce that Problem~\ref{prob:eq} is a particular case of 
Problem~\ref{prob:1} and 
\begin{equation}
\label{e:Z22}
Z=S\cap\bigcap_{n\in\NN}\Fix T_nR_n=S\cap\menge{x\in C}
{(\forall y\in C)\:\: 
F(x,y)+\scal{Bx}{y-x}\geq0}\neq\emp. 
\end{equation}

Now let us show that Algorithm~\ref{algo:eq} is deduced from 
Algorithm~\ref{algo:1}. Set
\begin{equation}
\label{e:erreureq}
(\forall n\in\NN)\quad
\begin{cases}
a_n=-\gamma_ne_{1,n}\\
b_n=q_n-J_{\gamma_nF}y_n\\
c_n=-\gamma_ne_{2,n}
\end{cases}
\quad\text{and}\quad
\begin{cases}
\lambda_n=1\\
Q_n=\sum_{i\in I_n}\omega_{i,n}P_{S_i}.
\end{cases}
\end{equation}
Then, since $\sup_{n\in\NN}\gamma_n<\chi^{-1}$, we have 
$\sum_{n\in\NN}\|a_n\|<\pinf$ and $\sum_{n\in\NN}\|c_n\|<\pinf$. 
Moreover, it follows from \eqref{e:eq1} and Lemma~\ref{l:321iv}
that $\sum_{n\in\NN}\|b_n\|<\pinf$, and, for every $x\in\HH$ and 
$n\in\NN$, $Q_nx$ is the projection onto the closed affine 
half-space $H_n(x)=\menge{z\in\HH}{\scal{z-Q_nx}{x-Q_nx}\leq0}$,
which satisfies $S\subset\cap_{i\in I_n}S_i=\Fix Q_n\subset H_n(x)$ 
\cite[Proposition~2.4]{Comb01}. 
On the other hand, we have $x_0\in\dom B$ and it follows from 
\eqref{e:moyendom} and the convexity of $\inte\dom B$ 
\cite[Theorem~27.1]{Simo08} that
\begin{equation}
(\forall n\in\NN)\quad
\ran\Bigg(\sum_{i\in I_n}
\omega_{i,n}P_{S_i}\Bigg)\subset\conv\,\Bigg(\bigcup_{i\in I_n}
S_i\Bigg)
\subset\conv\,\Bigg(\bigcup_{i\in I}S_i\Bigg)
\subset\inte\dom B.
\end{equation}
Hence, we conclude from \eqref{e:eq1} that, for every 
$n\in\NN\setminus\{0\}$, $x_n\in\inte\dom B$. Moreover, for every 
$n\in\NN$, $q_n\in C\subset\dom B$. Altogether, from \eqref{e:opeq} 
and \eqref{e:erreureq}, we deduce that Algorithm~\ref{algo:eq} is a 
particular case of Algorithm~\ref{algo:1}, which generates an 
infinite orbit $(x_n)_{n\in\NN}$.

Finally, let us show that \eqref{e:condconv} holds.
Suppose that $x_{k_n}\weakly x$, $x_n-T_nR_nx_n\to 0$, 
$z_n-x_n\to0$, $z_n-Q_nz_n\to0$, and, for every $n\in\NN$, 
denote by $p_n=T_nR_nx_n$. Hence, $p_{k_n}\weakly x$ and it follows 
from \eqref{e:opeq} and \eqref{e:J_F} that, for every $n\in\NN$,
\begin{align}
\label{e:eqeq2}
p_n=T_nR_nx_n
\:\:&\Leftrightarrow\:\: (\forall z\in C)\:\:\:
F(p_n,z)\!+\!\frac{1}{\gamma_n}\scal{p_n-x_n}{z-p_n}
+\scal{Bx_n}{z-p_n}\geq 0\nonumber\\
&\Leftrightarrow\:\: (\forall z\in C)\:\:\:
F(p_n,z)\!+\!\frac{1}{\gamma_n}\scal{p_n-x_n}{z-p_n}\nonumber\\
&\hspace{4.1cm}+\scal{Bx_n-Bp_n}{z-p_n}+\scal{Bp_n}{z-p_n}\geq 0
\nonumber\\
&\Leftrightarrow\:\: (\forall z\in C)\:\:\:
G(p_n,z)\!+\!\frac{1}{\gamma_n}\scal{p_n-x_n}{z-p_n}
+\scal{Bx_n-Bp_n}{z-p_n}\geq 0,
\end{align}
where
\begin{equation}
\label{e:HHHH}
G\colon C^2\to\RR\colon(x,y)\mapsto F(x,y)+\scal{Bx}{y-x}
\end{equation} 
satisfies the Assumption~\ref{a:1} \cite[Lemma~2.15(i)]{ComH05}.
Moreover, since $\inf_{n\in\NN}\gamma_{k_n}>0$, 
$x_{k_n}-p_{k_n}\to 0$, and $(p_{k_n})_{n\in\NN}$ is
bounded, we have $(\forall z\in C)
\;\scal{p_{k_n}-x_{k_n}}{z-p_{k_n}}/\gamma_{k_n}\to0$,
and from the Lipschitz-continuity of $B$ we obtain
$(\forall z\in C)\;
\scal{Bx_{k_n}-Bp_{k_n}}{z-p_{k_n}}\to0$.
Hence, we deduce from $p_{k_n}\weakly x$, 
Assumption~\ref{a:1}\ref{a:1iii},  
Assumption~\ref{a:1}\ref{a:1ii}, and \eqref{e:eqeq2} that
\begin{align}
\label{e:ineqeq3}
(\forall z\in C)\quad
G(z,x)&\leq\varliminf G(z,p_{k_n})\nonumber\\
&\leq\varliminf-G(p_{k_n},z)\nonumber\\
&\leq \varliminf
\frac{1}{\gamma_{k_n}}\scal{p_{k_n}-x_{k_n}}{z-p_{k_n}}
+\scal{Bx_{k_n}-Bp_{k_n}}{z-p_{k_n}}\nonumber\\
&=0.
\end{align}
Now let $\varepsilon\in\left]0,1\right]$ and $y\in C$. By convexity
of $C$ we have $x_{\varepsilon}=(1-\varepsilon)x
+\varepsilon y\in C$. Thus, Assumption~\ref{a:1}\ref{a:1i}, 
Assumption~\ref{a:1}\ref{a:1iii},
and \eqref{e:ineqeq3} with $z=x_{\varepsilon}$ yield
\begin{equation}
0=G (x_{\varepsilon},x_{\varepsilon})
\leq(1-\varepsilon)G(x_{\varepsilon},x)
+\varepsilon G(x_{\varepsilon},y)
\leq \varepsilon G(x_{\varepsilon},y), 
\end{equation}
whence $G(x_{\varepsilon},y)\geq0$. In view
of Assumption~\ref{a:1}\ref{a:1iv}, we conclude that 
$G(x,y)\geq\varlimsup_{\varepsilon\to0^+} G(x_{\varepsilon},y)\geq0$,
which yields 
\begin{equation}
\label{e:proveI}
(\forall y\in C)\quad G(x,y)=F(x,y)+\scal{Bx}{y-x}\geq 0.
\end{equation}
Now, let us prove that $x\in S$.
Since $z_n-x_n\to0$ and $z_n-Q_nz_n\to0$, \eqref{e:erreureq} yields
\begin{equation}
\label{e:xnplus1mxn}
(\forall n\in\NN)\quad
\|x_{n+1}-x_n\|=\|Q_nz_n-x_n\|\leq\|Q_nz_n-z_n\|+\|z_n-x_n\|
\to0.
\end{equation}
Now, fix $i\in I$. In view of \eqref{e:admissib}, 
there exists a sequence $(j_n)_{n\in\NN}$ in $\NN$ such that,
for every $n\in\NN$, $k_n\leq j_n\leq k_n+M_i-1
\:\:\text{and}\:\:i\in I_{j_n}$.
For every $n\in\NN$, it follows from \eqref{e:xnplus1mxn} that
\begin{equation}
\|x_{j_n}-x_{k_n}\|\leq\sum_{\ell=k_n}^{k_n+M_i-2}
\|x_{\ell+1}-x_{\ell}\|\leq(M_i-1)\max_{k_n\leq\ell
\leq k_n+M_i-2}\|x_{\ell+1}-x_{\ell}\|\to0.
\end{equation}
Thus, we deduce from $x_{k_n}\weakly x$ and $z_{j_n}-x_{j_n}\to0$ 
that $z_{j_n}\weakly x$.
On the other hand, let $z\in S$ and $n\in\NN$. Since, for every 
$\ell\in I_{j_n}$, $P_{S_\ell}z=z$,
and $\Id-P_{S_\ell}$ is firmly nonexpansive, 
from \eqref{e:eq1} and \eqref{e:erreureq} we have
\begin{align}
\|P_{S_i}z_{j_n}-z_{j_n}\|^2
&\leq\max_{\ell\in I_{j_n}}\|P_{S_{\ell}}z_{j_n}-z_{j_n}\|^2\nonumber\\
&\leq \frac{1}{\varepsilon}\sum_{\ell\in I_{j_n}}\omega_{\ell,j_n}
\|P_{S_{\ell}}z_{j_n}-z_{j_n}\|^2\nonumber\\
&\leq\frac{1}{\varepsilon}\sum_{\ell\in I_{j_n}}
\omega_{\ell,j_n}\scal{z-z_{j_n}}{(\Id-P_{S_{\ell}})z-
(\Id-P_{S_{\ell}})z_{j_n}}\nonumber\\
&=\frac{1}{\varepsilon}\Scal{z-z_{j_n}}
{\sum_{\ell\in I_{j_n}}\omega_{\ell,j_n}P_{S_{\ell}}z_{j_n}-z_{j_n}}
\nonumber\\
&\leq \frac{1}{\varepsilon}\|z-z_{j_n}\|\,
\|Q_{j_n}z_{j_n}-z_{j_n}\|.
\end{align}
Hence, since $(z_{j_n})_{n\in\NN}$ is a bounded sequence and 
$Q_{j_n}z_{j_n}-z_{j_n}\to0$, we deduce that 
$P_{S_i}z_{j_n}-z_{j_n}\to0$. The maximally monotonicity of 
$\Id-P_{S_i}$ yields that its graph is sequentially weakly-strongly 
closed, and since $z_{j_n}\weakly x$, we conclude that 
$x=P_{S_i}x\in S_i$. Altogether, from \eqref{e:proveI} and 
\eqref{e:Z22} we deduce that $x\in Z$, and the result follows 
from Theorem~\ref{t:0}.
\end{proof}\\
\vspace{0.3cm}\\
\noindent{\Large{\bf Acknowledgement}}\vspace{0.3cm}\\
I thank Professor Patrick L. Combettes for bringing this problem to my 
attention and for helpful discussions.

\end{document}